\newtheorem{dfn}{Definition} [section]
\newtheorem{theorem}[dfn]{Theorem}
\newtheorem{lemma}[dfn]{Lemma}
\def\qed{\hfill \rule{4pt}{7pt}}
\def\lc{\left\lceil}
\def\rc{\right\rceil}
\def\lf{\left\lfloor}
\def\rf{\right\rfloor}
\begin{document}

\title{\bf\Large New Short Proofs to Some Stability Theorems}
\date{\today}
\author{Xizhi Liu \thanks{Department of Mathematics, Statistics, and Computer Science, University of Illinois, Chicago, IL, 60607 USA.\ Email: \textbf{xliu246@uic.edu}}}
\maketitle
\begin{abstract}
We present new short proofs to both the exact and the stability results of two extremal problems.
The first one is the extension of Tur\'{a}n's theorem in hypergraphs, which was firstly studied by Mubayi $\cite{MU06}$.
The second one is about the cancellative hypergraphs, which was firstly studied by Bollob\'{a}s $\cite{BO74}$
and later by Keevash and Mubayi $\cite{KM04}$.
Our proofs are concise and straightforward, but give a sharper version of stability theorems to both problems.
\end{abstract}

\section{Introduction}
Let $H$ be an $n$-vertex $r$-graph and let $\mathcal{F}$ be a family of $r$-graphs.
$H$ is \textit{$\mathcal{F}$-free} if it does not contain any $r$-graph in $\mathcal{F}$ as a subgraph.
The \textit{Tur\'{a}n number} $ex(n,\mathcal{F})$ is the maximum number of edges in an $n$-vertex $\mathcal{F}$-free $r$-graph.
$\mathcal{F}$ is called \textit{non-degenerate} if the \textit{Tur\'{a}n density} $\pi(\mathcal{F}):= \lim_{n\to \infty} ex(n,\mathcal{F})/\binom{n}{r}$ is not $0$.

Determining, even asymptotically, the value of $ex(n,\mathcal{F})$ for general non-degenerate $r$-graphs $\mathcal{F}$ with $r\ge 3$ is known to be notoriously hard.
On the other hand, many families $\mathcal{F}$ have the property that there is a unique extremal family attain the value $ex(n,\mathcal{F})$,
and any $\mathcal{F}$-free hypergraph with close to $ex(n,\mathcal{F})$ edges is also structurally close to the extremal family.
This property of $\mathcal{F}$ is called \textit{stability}.
It is both an intersecting property of $\mathcal{F}$ and also an extremely useful tool in determining the value of $ex(n,\mathcal{F})$.
The Tur\'{a}n numbers for many families $\mathcal{F}$ has been determined by using this method, and we refer the reader to a survey by Keevash $\cite{KE11}$ for results before 2011.

In the present paper, we mainly focus on the stability properties for two extremal problems.
The first one is the extension of Tur\'{a}n's theorem in hypergraphs, and it was firstly studied by Mubayi $\cite{MU06}$.

Let $V_1\cup ...\cup V_{\ell}$ be a partition of $[n]$ with each part of size either $\lf n/{\ell} \rf$ or $\lc n/{\ell} \rc$.
$T_{r}(n,{\ell})$ is the family of all $r$-sets that intersect each $V_i$ in at most one vertex.
Let $t_{r}(n,{\ell})$ denote the number of edges in $T_{r}(n,{\ell})$.
$\mathcal{K}_{{\ell}+1}^{(r)}$ is the family of all $r$-graphs $F$ with at most $\binom{{\ell}+1}{2}$
edges such that for some $({\ell}+1)$-set $S$ every pair $x,y\in S$ is covered by an edge of $F$.
Notice that $T_{2}(n,{\ell})$ is just the ordinary Tur\'{a}n graph, and $\mathcal{K}_{{\ell}+1}^{(2)}$
is just the ordinary complete graph on ${\ell}+1$ vertices, which is also denoted by $K_{{\ell}+1}$.

In $\cite{MU06}$ Mubayi proved both the exact and stability result for $\mathcal{K}_{{\ell}+1}^{(r)}$-free $r$-graphs.
\begin{theorem}[Mubayi, \cite{MU06}]
Let $n, {\ell}, r\ge 2$. Then
\[
ex(n,\mathcal{K}_{{\ell}+1}^{(r)})=t_{r}(n,{\ell})
\]
and $T_{r}(n,{\ell})$ is the unique maximum $\mathcal{K}_{{\ell}+1}^{(r)}$-free $r$-graph on $n$ vertices.
\end{theorem}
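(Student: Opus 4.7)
The plan is to reduce Mubayi's theorem to a classical graph-theoretic statement via the $2$-shadow. For an $r$-graph $H$ on $[n]$, let $\partial H := \{\{x,y\} : \{x,y\} \subseteq e \text{ for some } e \in H\}$. The argument proceeds in four short steps: (i) $T_r(n,\ell)$ itself is $\mathcal{K}_{\ell+1}^{(r)}$-free, giving the lower bound $ex(n,\mathcal{K}_{\ell+1}^{(r)}) \ge t_r(n,\ell)$; (ii) if $H$ is $\mathcal{K}_{\ell+1}^{(r)}$-free, then $\partial H$ is $K_{\ell+1}$-free; (iii) each edge of $H$ is an $r$-clique in $\partial H$, so $|H| \le k_r(\partial H)$, the number of $r$-cliques of $\partial H$; (iv) by the classical theorem of Erd\H{o}s and Zykov on clique counts, $k_r(G) \le t_r(n,\ell)$ for every $K_{\ell+1}$-free graph $G$ on $n$ vertices, with equality only when $G = T_2(n,\ell)$.

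Steps (i)--(iii) are all one-liners. For (i), any $(\ell+1)$-set $S$ contains two vertices inside a common part $V_i$ by pigeonhole, and no edge of $T_r(n,\ell)$ covers such a pair; hence no member of $\mathcal{K}_{\ell+1}^{(r)}$ sits inside $T_r(n,\ell)$. For (ii), a copy of $K_{\ell+1}$ in $\partial H$ on a set $S$ lets me pick, for each pair in $\binom{S}{2}$, an edge of $H$ covering that pair; these at most $\binom{\ell+1}{2}$ edges form a member of $\mathcal{K}_{\ell+1}^{(r)}$ inside $H$, a contradiction. Step (iii) is immediate from the definition of $\partial H$: the $r$ vertices of any edge of $H$ pairwise span edges of $\partial H$. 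Chaining (ii)--(iv) yields $|H| \le k_r(\partial H) \le t_r(n,\ell)$, and equality forces $\partial H = T_2(n,\ell)$ together with $H$ realizing every $r$-clique of this graph, so $H = T_r(n,\ell)$.

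The main obstacle is step (iv), which is the genuine graph-theoretic content of the argument. I would invoke it as a black box; if a self-contained treatment is preferred, one can run Zykov-type symmetrization on $\partial H$, repeatedly replacing a vertex by a clone of one with larger $K_{r-1}$-link, which weakly increases $k_r$ and preserves $K_{\ell+1}$-freeness, reducing $\partial H$ to a complete multipartite graph on at most $\ell$ parts; a short convexity argument then forces balanced parts. With (iv) in hand, both the exact value and the uniqueness of the extremal $r$-graph fall out at once, and the same shadow reduction transports any robust (stability) form of (iv) into the sharper stability statement advertised for $\mathcal{K}_{\ell+1}^{(r)}$.
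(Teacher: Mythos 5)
Your reduction is exactly the paper's: the auxiliary graph $G=\partial H$ of covered pairs, the observation that $H$ is $\mathcal{K}_{\ell+1}^{(r)}$-free iff $G$ is $K_{\ell+1}$-free, and the injection $|H|\le k_r(G)$ are precisely the paper's Lemma 3.1 (your steps (i)--(iii) are correct one-liners, as claimed). Where you diverge is the final clique-counting input. The paper bounds $k_r(G)$ via the Fisher--Ryan inequality (its Theorem 2.2), which yields $k_r\le\binom{\ell}{r}(n/\ell)^r$; this matches $t_r(n,\ell)$ only when $\ell\mid n$, and the paper explicitly omits both the non-divisible case and the uniqueness analysis. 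You instead invoke the Erd\H{o}s--Zykov theorem that $k_r(G)\le t_r(n,\ell)$ for every $K_{\ell+1}$-free $G$ with equality only for $T_2(n,\ell)$; that black box is classical and legitimate (the paper itself cites Erd\H{o}s for the inequality in Section 5), and it hands you the exact value for all $n$ plus uniqueness in one stroke --- your equality analysis (equality in $|H|\le k_r(\partial H)$ forces every $r$-clique of $\partial H=T_2(n,\ell)$ to be an edge, i.e.\ $H=T_r(n,\ell)$) is sound. So your route is, if anything, more complete for the exact theorem. The trade-off is that the paper's choice of Fisher--Ryan is not incidental: the full chain of inequalities in equation (1) is what lets it pass from a near-extremal $k_r$ count down to a near-extremal edge count of $G$ in the stability proof of Theorem 1.2, so your closing remark that a ``robust form of (iv)'' would transport to stability is the one place where your black box would need to be upgraded to something like Fisher--Ryan anyway.
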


\begin{theorem}[Stability; Mubayi, \cite{MU06}]
Fix $l\ge r \ge 2$.
For every $\delta >0$,
there exists an $\epsilon>0$ and an $n_0$ such that the following holds for all $n\ge n_0$.
Let $G$ be an $n$-vertex $\mathcal{K}_{{\ell}+1}^{(r)}$-free $r$-graph
with at least $(1-\epsilon)t_{r}(n,{\ell})$ edges.
Then the vertex set of $G$ has a partition $V_1\cup \ldots \cup V_{\ell}$ such that
all but at most $\delta n^r$ edges have at most one vertex in each $V_i$.
\end{theorem}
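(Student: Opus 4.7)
The plan is to reduce the hypergraph problem to the graph Tur\'an problem via the shadow graph. Define $\partial G$ on $V(G)$ by declaring $\{u,v\}\in \partial G$ whenever some $r$-edge of $G$ contains both $u$ and $v$. My first step is to observe that $G$ is $\mathcal{K}_{\ell+1}^{(r)}$-free if and only if $\partial G$ is $K_{\ell+1}$-free: an $(\ell+1)$-clique of $\partial G$ on a set $S$ produces, by picking one covering edge per pair, a subfamily of $G$ of size at most $\binom{\ell+1}{2}$ that realizes a member of $\mathcal{K}_{\ell+1}^{(r)}$; the converse is immediate from the definition of $\partial G$.

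The second step is to note that every edge of $G$ is an $r$-clique in $\partial G$, so $e(G) \le k_r(\partial G)$, where $k_r(H)$ denotes the number of copies of $K_r$ in $H$. Since $\partial G$ is $K_{\ell+1}$-free, Zykov's theorem gives $k_r(\partial G) \le k_r(T_2(n,\ell)) = t_r(n,\ell)$. This recovers Theorem 1.2, with equality forcing $\partial G = T_2(n,\ell)$ and $G$ to coincide with the $r$-clique set of $\partial G$, namely $G = T_r(n,\ell)$.

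For the stability statement, assume $e(G) \ge (1-\epsilon)\, t_r(n,\ell)$, so that $k_r(\partial G) \ge (1-\epsilon)\, t_r(n,\ell)$. I would then invoke (or prove) a stability version of Zykov's $K_r$-maximization theorem to extract an $\ell$-partition $V(G)=V_1\cup\cdots\cup V_\ell$ under which $\partial G$ has at most $\eta(\epsilon)\,n^2$ pairs inside some $V_i$. Transferring back to $G$ is then immediate: any $r$-edge of $G$ with two vertices in the same part contributes a ``bad'' pair to $\partial G$, so the number of such $r$-edges is at most $\eta(\epsilon)\,n^2 \binom{n-2}{r-2} \le \delta n^r$ for a suitable choice of $\epsilon=\epsilon(\delta)$ and all large $n$.

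The main obstacle is the stability version of Zykov's theorem itself. A naive route is to first double count to get a lower bound on $e(\partial G)$ from $k_r(\partial G)$ and then apply Erd\H{o}s--Simonovits stability for $K_{\ell+1}$, but for $r\ge 3$ the resulting pair-bound $e(\partial G)\ge \binom{r}{2}k_r(\partial G)/\binom{n-2}{r-2}$ falls short of $t_2(n,\ell)$ and is too weak to trigger Erd\H{o}s--Simonovits. I expect the cleaner route is to apply Zykov's symmetrization directly to $\partial G$, turning it into a complete multipartite graph $H^*$ with $k_r(H^*)\ge k_r(\partial G)$; near-tightness of $k_r(H^*)\le t_r(n,\ell)$ then forces $H^*$ to be nearly balanced $\ell$-partite by strict concavity of the elementary symmetric polynomials, and tracking the symmetrization steps transfers the partition back to $\partial G$.
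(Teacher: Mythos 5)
Your reduction to the shadow graph is exactly the paper's setup (its Lemma 3.1 is your first two steps verbatim), and you have correctly diagnosed the crux: the naive double count $e(\partial G)\ge \binom{r}{2}k_r(\partial G)/\binom{n-2}{r-2}$ is far too weak to reach $(1-o(1))t_2(n,\ell)$ when $r\ge 3$. But your proposed fix leaves a genuine gap. You defer everything to an unproven ``stability version of Zykov's theorem,'' and the specific step you wave at --- ``tracking the symmetrization steps transfers the partition back to $\partial G$'' --- is precisely where such an argument gets hard. Zykov symmetrization replaces a vertex's entire neighborhood by another's; after a sequence of such steps the final complete multipartite graph $H^*$ can be far from $\partial G$ in edit distance even though $k_r$ never decreased, so near-balancedness of $H^*$ does not by itself yield a partition under which $\partial G$ has $o(n^2)$ internal pairs. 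Making symmetrization quantitative (showing that each step which moves the graph far from multipartite strictly drops the clique count by a measurable amount) is a real theorem, not bookkeeping, and as written your proof does not contain it.

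The paper's way around this is a single inequality you are missing: the Fisher--Ryan inequality for $K_{\ell+1}$-free graphs, $\left(k_r/\binom{\ell}{r}\right)^{1/r}\le\left(k_2/\binom{\ell}{2}\right)^{1/2}$. Applied to $\partial G$ with $k_r\ge (1-2\epsilon)\binom{\ell}{r}(n/\ell)^r$, it gives $e(\partial G)\ge (1-2\epsilon)^{2/r}\binom{\ell}{2}(n/\ell)^2\ge(1-2\epsilon)t_2(n,\ell)$ --- an \emph{edge-count} bound strong enough to trigger F\"uredi's edge-stability theorem for $K_{\ell+1}$-free graphs directly, after which your transfer-back computation (multiplying bad pairs by $\binom{n}{r-2}$) goes through and even yields the explicit linear dependence $\epsilon=(r-2)!\,\delta$. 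In short: your route could in principle be completed, but the clique-to-edge upgrade via Fisher--Ryan is the idea that makes the proof short, and without it (or a full proof of quantitative Zykov stability) the argument is incomplete.
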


Note that in $\cite{MU06}$ Mubayi did not give an explicit relation between $\epsilon$ and $\delta$,
but our proof will show that it suffices to choose $\epsilon = (r-2)! \delta$.
Also, note that in $\cite{DE17}$ Contiero, Hoppen, et al. also proved a linear dependence between $\delta$ and $\epsilon$ by induction on ${\ell}+r$,
but our proof is different and much shorter.

\bigskip

The second one is about the cancellative hypergraphs, and it was firstly studied by Bollob\'{a}s $\cite{BO74}$
and later by Keevash and Mubayi $\cite{KM04}$.

A hypergraph $H$ is called cancellative if it does not contain three distinct sets $A,B,C$ with $A\triangle B\subset C$.
Note that an ordinary graph $G$ is cancellative iff it does not contain a triangle (i.e. $K_3$),
and Mantel's theorem states that the maximum size of a cancellative graph is uniquely achieved by $T_{2}(n,2)$.
Motivated by Mantel's theorem, in the 1960's, Katona raised the question of determining the
maximum size of a cancellative $3$-graph and conjectured that the maximum size of
a cancellative $3$-graph is achieved by $T_{3}(n,3)$.
Katona's conjectured was proved by Bollob\'{a}s in $\cite{BO74}$.

\begin{theorem}[Bollob\'{a}s, \cite{BO74}]
A cancellative $3$-graph on $n$ vertices has at most $t_{3}(n,3)$ edges, with equality only for $T_{3}(n,3)$.
\end{theorem}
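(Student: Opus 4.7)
\medskip
\noindent\textbf{Proof proposal.} My plan is to show, by a Zykov-style symmetrization, that every extremal $n$-vertex cancellative 3-graph is the blowup of a single 3-edge: there exists a partition $V(H)=V_1\cup V_2\cup V_3$ whose edges are precisely the triples with one vertex in each $V_i$. Given such a structural conclusion, maximizing $|V_1||V_2||V_3|$ subject to $|V_1|+|V_2|+|V_3|=n$ forces the partition to be as balanced as possible, so $e(H)\le t_3(n,3)$ with equality if and only if $H=T_3(n,3)$.

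The central step is the following symmetrization. Let $H$ be an extremal cancellative 3-graph and call two distinct vertices $u,v$ \emph{free} if no edge of $H$ contains both. I would show that for every free pair $\{u,v\}$ the link graphs $L(u)$ and $L(v)$ agree on $V(H)\setminus\{u,v\}$. If not, assume $d(u)\ge d(v)$, form $H'$ from $H$ by deleting every edge through $v$ and inserting the clones $\{vab:uab\in H\}$, and note that $|H'|=|H|-d(v)+d(u)\ge|H|$, strictly if $d(u)>d(v)$ (the case $d(u)=d(v)$ with $L(u)\ne L(v)$ is handled by swapping roles). To reach a contradiction with maximality, one must verify that $H'$ is cancellative: any forbidden triple $A,B,C\subseteq H'$ with $A\triangle B\subseteq C$ that uses at least one cloned edge can be lifted, by replacing each cloned edge $vab$ with the original $uab\in H$, to a triple in $H$ satisfying the same containment. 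Freeness of $\{u,v\}$ prevents two lifted edges from coinciding and prevents any $C\in H'$ from containing both $u$ and $v$, so the lift is a genuine forbidden triple in $H$, contradicting cancellativity.

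Once every free pair has equal links, I partition $V(H)$ into equivalence classes by link equality and check that each class is an independent set in $H$. Indeed, if $u,v$ lie in the same class and $uvw\in H$, the equality $L(u)=L(v)$ forces $uwz,vwz\in H$ for any link-neighbour $z$ of $w$ outside $\{u,v\}$, whence $A=uvw$, $B=uwz$, $C=vwz$ realize $A\triangle B=\{v,z\}\subseteq C$, violating cancellativity (a short boundary argument handles the degenerate case when no such $z$ exists). Hence every edge of $H$ has its three vertices in three distinct classes, and the quotient 3-graph on the class representatives is itself cancellative with every pair of representatives free; applying the symmetrization principle to the quotient collapses it to a single edge, giving exactly three classes $V_1,V_2,V_3$.

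The main obstacle I anticipate is the cancellativity check in the symmetrization step. One must enumerate the several ways a cloned edge $vab$ can appear among $A,B,C$—alone or with other clones or with original edges $uab$—and verify in each case that the lift is a valid forbidden triple of three distinct edges of $H$. This is precisely where the free-pair hypothesis is decisive. The only secondary technical point is the independence-of-classes argument, whose boundary case may require either sharpening the equivalence or one further application of the symmetrization to release a useful link-neighbour of $w$.
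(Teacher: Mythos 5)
Your symmetrization step itself is essentially sound: I checked the case analysis, and cloning a vertex $v$ onto a free partner $u$ does preserve cancellativity for exactly the reasons you indicate (every forbidden triple in $H'$ lifts to one in $H$, and freeness rules out the degenerate coincidences). The decisive gap comes at the end. Once all free pairs are merged, you are left with a cancellative $3$-graph in which every pair of the remaining classes is \emph{covered} by an edge (representatives of distinct classes are non-free --- not free, as you write; if they were free they would already have been merged, and a quotient with all pairs free would be empty). Your claim that such a configuration ``collapses to a single edge'' is false: any Steiner triple system is cancellative (two blocks meet in exactly one point, so every symmetric difference has four elements and cannot lie in a block) and covers every pair, so the Fano plane is a $7$-vertex covering cancellative pattern on which no further symmetrization is possible. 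Its balanced blowup is cancellative with $7(n/7)^3 = n^3/49$ edges. Hence your reduction leaves open the real content of Bollob\'{a}s's theorem: that among all covering cancellative patterns, the weighted blowup density (Lagrangian) is maximized, and uniquely so, by the single triple --- e.g.\ that $1/49$ loses to $(1/3)^3 = 1/27$ in general. That optimization is not addressed anywhere in the proposal.

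There are two secondary gaps. First, extremality of the edge count only forces free pairs to have equal \emph{degree}; ``swapping roles'' does not upgrade this to equal \emph{links}, so your link-equality classes are not yet justified --- you need an iterated symmetrization with a termination argument, or an extremal example chosen to also optimize a secondary parameter. Second, uniqueness of $T_3(n,3)$ requires tracking exactly when each symmetrization and each merging step is strict, which the proposal does not do. For contrast, the paper's proof avoids structural reductions entirely: it double counts the ordered pairs outside the shadow (its equation $(2)$), bounds each codegree graph $|L(u,v)|$ for $u,v \in N(T)$ by Mantel's theorem applied on the $n-d(T)$ vertices outside the independent set $N(T)$, and then finishes with Jensen's inequality and a one-variable optimization yielding $|H| \le (n/3)^3$. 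If you want to salvage the symmetrization route, you would essentially be reconstructing the Frankl--F\"{u}redi Lagrangian argument, and the pattern-comparison step would still need to be proved.
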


In $\cite{KM04}$ a new proof of Bollob\'{a}s' result was given by Keevash and Mubayi,
and they also proved a stability theorem for cancellative $3$-graphs.

\begin{theorem}[Stability; Keevash and Mubayi, \cite{KM04}]
For any $\delta>0$ there exists $\epsilon>0$ and $n_0$ such that
the following holds for all $n\ge n_0$.
Any $n$-vertex cancellative $3$-graph with  at least $(1-\epsilon)t_3(n,3)$ edges
has a partition of  vertex set as $[n]=V_1\cup V_2\cup V_3$ such that
all but at most $\delta n^3$ edges of $H$ has one vertex in each $V_i$.
\end{theorem}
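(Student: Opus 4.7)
The plan is to exploit a structural consequence of cancellativity at the level of the shadow $\partial H := \{xy : xyz \in H \text{ for some } z\}$. For every pair $uv \in \partial H$, the codegree neighborhood $N_H(uv) := \{w : uvw \in H\}$ is an independent set in $\partial H$: if $w_1, w_2 \in N_H(uv)$ were joined in $\partial H$ by some $w_1 w_2 z \in H$, then $uvw_1$, $uvw_2$, $w_1 w_2 z$ would be three distinct $H$-edges with $(uvw_1)\triangle(uvw_2)=\{w_1,w_2\}\subset w_1 w_2 z$, violating cancellativity.

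Consequently, any single edge $uvw \in H$ yields a candidate tripartition $V_1 := N_H(vw)$, $V_2 := N_H(uw)$, $V_3 := N_H(uv)$. A symmetric application of cancellativity forces $V_1, V_2, V_3$ to be pairwise disjoint (with $u \in V_1$, $v \in V_2$, $w \in V_3$), and each $V_i$ is independent in $\partial H$, so no edge of $H$ has two or more vertices in any single $V_i$. Placing the leftover set $W_e := V(H)\setminus (V_1\cup V_2\cup V_3)$ arbitrarily into $V_1, V_2, V_3$, every edge failing to hit each part in exactly one vertex must carry at least two of its vertices from $W_e$, and the number of such bad edges is at most $n\binom{|W_e|}{2}$.

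It thus suffices to find $e = uvw \in H$ with small $|W_e| = n - \sigma(e)$, where $\sigma(e) := d(uv)+d(uw)+d(vw)$. Double counting yields $\sum_{e\in H}\sigma(e) = \sum_{xy\in\partial H} d(xy)^2 \ge 9|H|^2/|\partial H|$ (Cauchy--Schwarz), while $\sigma(e)\le n$ gives $\sum_{e\in H}\sigma(e) \le n|H|$. Combining,
\[
\sum_{e\in H}\bigl(n-\sigma(e)\bigr) \le n|H| - \frac{9|H|^2}{|\partial H|},
\]
which is at most $O(\epsilon)n|H|$ under the hypothesis $|H|\ge(1-\epsilon)t_3(n,3)$ provided $|\partial H| \le (1+O(\epsilon))n^2/3$. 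Averaging then yields some $e$ with $|W_e|\le O(\epsilon)n$, whence the bad-edge count is at most $O(\epsilon^2) n^3 \le \delta n^3$ upon choosing $\epsilon = O(\sqrt{\delta})$.

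The main obstacle is therefore to establish the matching upper bound $|\partial H| \le (1+O(\epsilon))n^2/3$ in the near-extremal regime. In the spirit of this paper's short proofs, this should follow by combining the exact theorem of Bollob\'as with the independence property of the codegree neighborhoods: if $|\partial H|$ significantly exceeds $n^2/3$, then $d(xy) \le \alpha(\partial H)$ for every shadow pair together with an elementary convexity step forces $|H|$ strictly below $(1-\epsilon)t_3(n,3)$, a contradiction.
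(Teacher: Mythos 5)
Your skeleton is attractive and genuinely different from the paper's: instead of working with the link of a single pair and invoking triangle-free stability (the paper's route via its Lemmas 2.3, 2.5 and 4.1), you pick one edge $uvw$ whose three codegree neighborhoods $N(vw),N(uw),N(uv)$ are pairwise disjoint independent sets, and you locate a good edge by averaging $\sigma(e)=d(uv)+d(uw)+d(vw)\le n$ against the Cauchy--Schwarz bound $\sum_{e}\sigma(e)=\sum_{T\in\partial H}d(T)^2\ge 9|H|^2/|\partial H|$. All of that is correct. But the argument hinges entirely on the upper bound $|\partial H|\le(1+O(\epsilon))n^2/3$, which you explicitly defer, and the route you sketch for it does not work: the inequality $3|H|=\sum_{T\in\partial H}d(T)\le|\partial H|\cdot\alpha(\partial H)$ only helps if a large edge count forces $\alpha(\partial H)$ to be small, and no such implication exists --- a graph on $n$ vertices can have $n^2/3$ or more edges and still have independence number close to $n$. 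In fact the shadow bound is exactly where the real work lies, and it is precisely the content of the paper's equation $(2)$: one must count the non-shadow pairs $\binom{n}{2}-|\partial H|$ from below by summing, over $T\in\partial H$, the pairs inside $N(T)$ weighted by $1/|L(u,v)|$ to correct for multiplicity, then bound $|L(u,v)|\le(n-d(T))^2/4$ by Mantel applied to the triangle-free link, and finish with a convexity step. Everything is tight here ($d(T)\approx n/3$, $|L(u,v)|\approx n^2/9$, $\binom{n}{2}-|\partial H|\approx n^2/6$), so no cruder estimate will do; and because the resulting inequality is only second-order tight at the extremal point, what comes out is $|\partial H|\le(1+O(\sqrt{\epsilon}))n^2/3$ rather than $1+O(\epsilon)$ --- still enough for the theorem, but it degrades your dependence to $\epsilon=\Theta(\delta^2)$, versus the paper's linear $\epsilon=\delta/100$.

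Two smaller points. First, your bad-edge count is off: after distributing $W_e$ among the parts, an edge with two vertices in some $V_i$ need only have \emph{one} vertex in $W_e$ (one vertex in the original independent set and one relocated vertex already creates a violation), so the correct bound is $|W_e|\binom{n}{2}=O(|W_e|\,n^2)$ rather than $n\binom{|W_e|}{2}$; this still gives $O(\sqrt{\epsilon})n^3\le\delta n^3$, so the conclusion survives. Second, your disjointness and independence claims for $N(vw),N(uw),N(uv)$ are correct and match the paper's Lemma 2.4. The verdict: a nice alternative second half, but the first half --- the shadow upper bound --- is the theorem's actual crux, it is missing, and the suggested patch via $\alpha(\partial H)$ is not a valid argument.
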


In their proof they also gave an explicit relation between $\epsilon$ and $\delta$, which is $\epsilon < 27/2 \times 10 ^{-24} \delta^6$.
Our proof will show that it suffices to choose $\epsilon =\delta/100$.

\medskip

The rest of this paper is organized as following.
In Section 2 we introduce some definitions, useful theorems and lemmas.
In Section 3 we prove Theorems 1.1 and 1.2.
In Section 4 we prove Theorems 1.3 and 1.4.
In Section 5 we present a short proof to the stability of a generalized Tur\'{a}n problem in graph theory.
In the last section we present a brief discussion about the relation between $\epsilon$ and $\delta$.

\section{Preliminaries}
Let $H$ be an $r$-graph on $[n]$.
The \textit{size} of $H$ is the number of edges in $H$, which is denoted by $|H|$.
$I\subset [n]$ is an \textit{independent set} if every edge  in $H$ contains at most one vertex of $I$.
The \textit{shadow} of $H$, denoted by $\partial H$, is defined as
\[
\partial H=\left\{A  \in \binom{[n]}{r-1}: \exists B\in H\text{ such that } A\subset B \right\}
\]
For every nonempty set $S\subset [n]$, define the \textit{link} $L(S)$ of $S$ in $H$ to be
\[
L(S)=\left\{A\in\partial H: A\cup\{s\}\in H,\ \forall s\in S \right\}
\]
For convenience, we use $L(u)$ to represent $L(\{u\})$, and use $L(u,v)$ to represent $L(\{u,v\})$.
Note that in our proof $L(u,u)$ also represents $L(\{u\})$.

Let $T\in \partial H$, the  \textit{neighborhood} of $T$ in $H$ is defined as
\[
N\left(T\right)=\left\{v\in[n]: T\cup \{v\}\in H \right\}
\]
and the \textit{degree} of $T$ is $d\left(T\right)=|N\left(T\right)|$.
It follows from an easy double counting that
\[
\sum_{T\in \partial H}d\left(T\right)=3|H|
\]

The edge set of an ordinary graph $G$ can be viewed as a family of unordered pairs.
To keep the calculations in our proof simply,
we define an auxiliary family $\vec{G}$ of order pairs as $\vec{G}=\{(u,v): \{u,v\}\in G\}$.
Note that if $\{u,v\}\in G$, then $(u,v)$ and $(v,u)$ are both contained in $\vec{G}$
and hence we have $|\vec{G}|=2|G|$.
Let $N$ be a set, we use $N^2$ to denote the cartesian product $N\times N$,
which is also the collection of all ordered pairs $(u,v)$ with $u,v\in N$.
Here $u$ and $v$ might be the same.

Our proof of theorems 1.1 and 1.2 is based on two results.
The first one is the stability of $K_{{\ell}+1}$-free graphs.

\begin{theorem}[F\"{u}redi, \cite{FU15}]
Let $t\ge 0$ and let $G$ be an $n$-vertex $K_{{\ell}+1}$-free graph with $t_{2}(n,{\ell})-t$ edges.
Then $G$ contains an ${\ell}$-partite subgraph $G'$ with at least $t_{2}(n,{\ell})-2t$ edges.
\end{theorem}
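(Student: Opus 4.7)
The plan is to find an $\ell$-partition $V(G) = V_1 \cup \cdots \cup V_\ell$ with at most $t$ within-part edges; removing these edges yields an $\ell$-partite subgraph $G'$ with $e(G') \ge e(G) - t = t_2(n,\ell) - 2t$. I would proceed by induction on $\ell$, the case $\ell = 1$ being vacuous. For the inductive step, first dispose of the case where $G$ is $K_\ell$-free: applying the inductive hypothesis at level $\ell-1$ to $G$ (which has $t_2(n,\ell-1) - t'$ edges with $t' = t_2(n,\ell-1) - e(G) \ge 0$) produces an $(\ell-1)$-partite, hence $\ell$-partite, subgraph with at least $2e(G) - t_2(n,\ell-1)$ edges, and the inequality $t_2(n,\ell) \ge t_2(n,\ell-1)$ shows this exceeds $t_2(n,\ell) - 2t$, as required.

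So assume $G$ contains a $K_\ell$ on vertices $U = \{x_1, \ldots, x_\ell\}$. I would start with the partition induced by $U$: for each $v \in V(G) \setminus U$, $K_{\ell+1}$-freeness guarantees some index $i$ with $v \not\sim x_i$, so place $v$ in $V_i$ and put $x_i \in V_i$. I would then locally improve: while some vertex $v \in V_i$ has $d_{V_i}(v) > d_{V_j}(v)$ for some $j\neq i$, reassign $v$ to $V_j$ (this strictly decreases the number of within-part edges, so the process terminates). Let $V_1, \ldots, V_\ell$ denote the stabilized partition, $m$ the resulting within-part edge count, and $T$ the complete $\ell$-partite graph on these parts. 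A direct calculation from $|E(T)| \le t_2(n,\ell)$ and $e(G) = t_2(n,\ell) - t$ gives $M - m \le t$, where $M = |E(T)| - e(G')$ is the number of between-part non-edges of $G$.

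The decisive step is to show $M \ge 2m$, since then $m \le M - m \le t$. I would establish this by a charging argument exploiting $K_{\ell+1}$-freeness: for each within-part edge $\{u, v\} \subseteq V_i$, the $(\ell+1)$-vertex set $\{u, v\} \cup (U \setminus \{x_i\})$ cannot induce a $K_{\ell+1}$, so among the $2(\ell-1)$ potential edges between $\{u, v\}$ and $U \setminus \{x_i\}$ at least one is missing; any such missing edge is a between-part non-edge involving $u$ or $v$. The hard part will be orchestrating these charges globally so that, across all within-part edges simultaneously, each between-part non-edge is counted at most half as often as it is witnessed. I expect this to be achievable by combining the local optimality enforced above (which controls $d_{V_i}(v)$ relative to $d_{V_j}(v)$) with a Hall-type selection assigning each within-part edge two distinct between-part non-edges; should this prove delicate, one can fall back on an inductive application of the statement to the $K_\ell$-free link $G[N(x_i)]$ and distribute the Turán deficit additively across the $\ell$ links.
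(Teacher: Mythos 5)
The paper does not prove this statement --- it is quoted verbatim from F\"{u}redi \cite{FU15} and used as a black box --- so I am comparing your proposal against F\"{u}redi's own argument. Your overall skeleton is exactly right and matches his: find a partition with $m$ within-part edges and $M$ between-part non-edges, observe $M-m\le t$ from $e(G)=e(T)-M+m\le t_2(n,\ell)-M+m$, and reduce everything to the single inequality $M\ge 2m$. Your handling of the $K_\ell$-free case by induction on $\ell$ is also fine. But the inequality $M\ge 2m$ is precisely the entire content of the theorem, and you do not prove it; worse, the charging scheme you propose cannot be repaired. Every witness you produce for a within-part edge $\{u,v\}\subseteq V_i$ is a missing pair between $\{u,v\}$ and the fixed $\ell$-clique $U$, so all of your charges land on between-part non-edges incident to $U$, of which there are at most $\ell n$. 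Since the theorem must hold (and is non-vacuous) when $t$, and hence $m$, is of order $n^{1+c}$ or even $n^2$, no Hall-type selection can assign two \emph{distinct} such non-edges to each of $2m>\ell n$ demands. The local-optimality condition $d_{V_i}(v)\le d_{V_j}(v)$ does not rescue this either: summing the resulting bound on crossing non-neighbours reduces exactly to $e(T)-e(G)\ge m$, which is equivalent to the claim $m\le t$ you are trying to prove, i.e.\ the argument becomes circular.

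The step you are missing is the choice of partition. F\"{u}redi does not take a max-cut or clique-based partition; he uses the greedy max-degree decomposition behind Erd\H{o}s' degree-majorization lemma: set $V_0=V$, let $x_i$ be a vertex of maximum degree in $G[V_{i-1}]$, put $U_i=V_{i-1}\setminus N(x_i)$ and $V_i=V_{i-1}\cap N(x_i)$; $K_{\ell+1}$-freeness forces $V_\ell=\emptyset$, so $U_1,\dots,U_\ell$ partition $V$. The point of this construction is the degree bound it buys: for $x\in U_i$ one has $d_{G[V_{i-1}]}(x)\le d_{G[V_{i-1}]}(x_i)=|V_i|$, so the number of non-neighbours of $x$ inside $V_i$ (all lying in \emph{later} parts) is at least the number of neighbours of $x$ inside $U_i$. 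Summing over all $x$ counts each within-part edge twice but each between-part non-edge at most once (only from the endpoint in the earlier part), which is exactly the oriented count that yields $M\ge 2m$ and hence $m\le t$. Your proposal, as written, has the right target but no valid route to it; the ``fallback'' of recursing on the $\ell$ links and distributing the deficit is too vague to assess and is not obviously sound.
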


The second one describes an relation between the number of copies of $K_{r_1}$ and $K_{r_2}$ in a $K_{{\ell}+1}$-free graph,
where $r_1$ and $r_2$ are two positive integers less that ${\ell}+1$.

\begin{theorem}[Fisher and Ryan, \cite{FR92}]
Let $G$ be an $n$-vertex $K_{{\ell}+1}$-free graph.
For every $i\in [{\ell}]$,
let $k_i$ denote the number of copies of $K_{i}$ in $G$.
Then
\[
\left( \frac{k_{\ell}}{\binom{{\ell}}{{\ell}}}\right)^{\frac{1}{{\ell}}}\le \left( \frac{k_{{\ell}-1}}{\binom{{\ell}}{{\ell}-1}}\right)^{\frac{1}{{\ell}-1}}\le ... \le \left( \frac{k_2}{\binom{{\ell}}{2}}\right)^{\frac{1}{2}}\le \left( \frac{k_1}{\binom{{\ell}}{1}}\right)^{\frac{1}{1}} \eqno{(1)}
\]
\end{theorem}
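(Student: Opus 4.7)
The plan is to reduce (1) to the single consecutive inequality
\[
\left(\frac{k_{i+1}}{\binom{\ell}{i+1}}\right)^{1/(i+1)} \le \left(\frac{k_{i}}{\binom{\ell}{i}}\right)^{1/i}, \qquad 1\le i\le \ell-1, \qquad (\star)
\]
and then chain the resulting $\ell-1$ inequalities to recover the full statement.

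I would attempt $(\star)$ by induction on $\ell$. The base case $\ell=2$ is precisely Mantel's theorem: a triangle-free graph satisfies $\sqrt{k_{2}}\le k_{1}/2$. For the inductive step, let $G$ be $K_{\ell+1}$-free on $n$ vertices. For each vertex $v$, its \emph{link} $G_{v}:=G[N(v)]$ is $K_{\ell}$-free and hence satisfies the induction hypothesis. The bridge from links to $G$ is the double-counting identity
\[
(j+1)\,k_{j+1}(G)\;=\;\sum_{v\in V(G)}k_{j}(G_{v}),\qquad j\ge 0,
\]
obtained by counting pairs $(v,C)$ with $C$ a $(j+1)$-clique containing $v$. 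Combining this (for $j=i$) with the inductive bound $k_{i}(G_{v})\le \binom{\ell-1}{i}(k_{i-1}(G_{v})/\binom{\ell-1}{i-1})^{i/(i-1)}$ available in each link, and then invoking H\"older's inequality with exponents tuned so that the link-wise binomial coefficients $\binom{\ell-1}{\cdot}$ reassemble globally into $\binom{\ell}{i+1}$ and $\binom{\ell}{i}$, should yield $(\star)$ for $G$.

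The principal obstacle lies in this combining step: a careless application of Jensen's inequality turns $\sum_{v}k_{i-1}(G_{v})^{i/(i-1)}$ into a \emph{lower} bound of the form $n^{-1/(i-1)}(ik_{i}(G))^{i/(i-1)}$, which is exactly the wrong direction for proving $(\star)$. A successful averaging therefore needs a refined H\"older split whose weights are calibrated by the extremal case. The Turán graph $T(n,\ell)$, in which every link $G_{v}$ is itself a balanced $(\ell-1)$-partite Turán graph and (1) holds with equality throughout, fixes essentially the only admissible choice of exponents and serves as a sanity check for the calibration. If this refined H\"older step still eludes one, a back-up route is to look for a polynomial built from $G$ whose coefficients encode $k_{i}/\binom{\ell}{i}$ and whose roots are non-negative reals, in which case $(\star)$ follows directly from Maclaurin's inequality for these roots.
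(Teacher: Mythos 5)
The paper offers no proof of Theorem 2.2 to compare against: it is imported verbatim from Fisher and Ryan \cite{FR92} and used as a black box, so your argument must stand on its own. As written it does not --- it is a plan whose decisive step is explicitly deferred. The reduction to consecutive ratios, the base case via Mantel, and the identity $(j+1)k_{j+1}(G)=\sum_{v}k_{j}(G_{v})$ are all correct, but after applying the inductive hypothesis in each link you must bound $\sum_{v}k_{i-1}(G_{v})^{i/(i-1)}$ from \emph{above} knowing only $\sum_{v}k_{i-1}(G_{v})=i\,k_{i}(G)$. Because the exponent exceeds $1$, Jensen, H\"older and the power-mean inequalities all bound this sum from \emph{below}; an upper bound needs control of $\max_{v}k_{i-1}(G_{v})$, and the only bound the induction supplies for the $K_{\ell}$-free link (Erd\H{o}s's $k_{i-1}(G_{v})\le\binom{\ell-1}{i-1}\left((n-1)/(\ell-1)\right)^{i-1}$) reintroduces $n$ and degrades the conclusion to the weaker estimate $k_{i}\le\binom{\ell}{i}(n/\ell)^{i}$ rather than the consecutive-ratio inequality; the trivial alternative $\sum_{v}a_{v}^{p}\le\left(\sum_{v}a_{v}\right)^{p}$ is off by a polynomial factor in $n$ even on the Tur\'{a}n graph. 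You name this obstacle yourself, but ``a refined H\"older split calibrated by the extremal case'' is a hope, not an argument: no weights are exhibited, and it is precisely here that the whole difficulty of the theorem lives.

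The back-up route via Maclaurin cannot be repaired. Writing $k_{i}(G)=e_{i}(y_{1},\dots,y_{\ell})$ for non-negative reals $y_{j}$ would force Newton's inequalities $\left(k_{i}/\binom{\ell}{i}\right)^{2}\ge\left(k_{i-1}/\binom{\ell}{i-1}\right)\left(k_{i+1}/\binom{\ell}{i+1}\right)$, and these fail for $K_{\ell+1}$-free graphs: take $\ell=3$ and let $G$ be a single triangle together with $n-3$ isolated vertices, so that $k_{1}=n$, $k_{2}=3$, $k_{3}=1$. Newton at $i=2$ would demand $1\ge n/3$, false for $n>3$, while the Fisher--Ryan chain $1\le 1\le n/3$ holds comfortably. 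Hence no real-rooted polynomial with these coefficients exists, and $(1)$ is genuinely weaker than log-concavity of the normalized clique numbers; Maclaurin proves the wrong (stronger, false) statement. To make this section self-contained you would need a different mechanism altogether --- for instance the weighted counting argument of Fisher and Ryan's original paper --- rather than either route sketched here.
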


\bigskip

To prove theorems 1.3 and 1.4 we first present two simply properties of cancellative $3$-graphs.

\begin{lemma}
Let $H$ be a cancellative $3$-graph, and $v$ is a vertex in $H$.
Then the link graph $L(v)$ is triangle-free.
\end{lemma}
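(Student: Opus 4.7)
The plan is a direct contradiction argument: I unpack the definition of the link, assume a triangle exists in $L(v)$, write down the three corresponding edges of $H$, and then exhibit an explicit violation of the cancellative property.

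More concretely, suppose for contradiction that $L(v)$ contains a triangle on vertices $a,b,c$. By the definition of the link, this means that the three sets
\[
A=\{v,a,b\},\quad B=\{v,a,c\},\quad C=\{v,b,c\}
\]
are all edges of $H$. These are pairwise distinct, so they form a legitimate triple of candidate sets for testing cancellativity. The main (and only) computation is to form the symmetric difference
\[
A\triangle B=\{b,c\},
\]
which is clearly a subset of $C=\{v,b,c\}$. This exhibits three distinct edges of $H$ with $A\triangle B\subset C$, contradicting the assumption that $H$ is cancellative, and hence $L(v)$ must be triangle-free.

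I do not anticipate any real obstacle here; the argument is essentially a one-line calculation once the correct pair of edges is chosen, and the only thing to watch is that the three sets we pick are genuinely distinct (which holds because $\{a,b\},\{a,c\},\{b,c\}$ are three distinct pairs on $\{a,b,c\}$). The symmetric-difference computation will be presented inline and no auxiliary lemma or case analysis should be required.
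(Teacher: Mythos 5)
Your proof is correct and is essentially identical to the paper's own argument: both take a triangle $\{x,y,z\}$ in $L(v)$, form the three edges $\{v,x,y\},\{v,x,z\},\{v,y,z\}$ of $H$, and observe that $\{v,x,y\}\triangle\{v,x,z\}=\{y,z\}\subset\{v,y,z\}$ violates cancellativity. No differences worth noting.
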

\begin{proof}
Suppose $\{x,y,z\}$ is a triangle in $L(v)$.
Then $\{v,x,y\},\{v,x,z\},\{v,y,z\}$ are all contained in $H$, but
\[
\{v,x,y\}\triangle\{v,x,z\}=\{y,z\}\subset \{v,y,z\}
\]
which is a contradiction. Therefore, $L(v)$ is triangle-free.
\end{proof}

\begin{lemma}
Let $H$ be a cancellative $3$-graph, and $T\in \partial H$.
Then $N(T)$ is an independent set.
\end{lemma}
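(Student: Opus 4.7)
The plan is to argue by contradiction, exploiting the definition of cancellativity directly. Since $H$ is a $3$-graph and $T \in \partial H$, the set $T$ has size $2$, say $T = \{a, b\}$. Assuming $N(T)$ fails to be independent, some edge of $H$ contains at least two vertices of $N(T)$; let $u, v \in N(T)$ be two such vertices, lying in a common edge $e = \{u, v, w\} \in H$.

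The next step is to write down three edges of $H$ that violate cancellativity. By the definition of $N(T)$, both $\{a, b, u\}$ and $\{a, b, v\}$ belong to $H$, and by assumption $\{u, v, w\} \in H$ as well. The key computation is simply
\[
\{a,b,u\} \triangle \{a,b,v\} = \{u,v\} \subset \{u,v,w\},
\]
which exactly matches the forbidden configuration $A \triangle B \subset C$ in the definition of a cancellative $3$-graph.

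The only subtlety, which I would address briefly, is distinctness of $A$, $B$, $C$. By the definition of $N(T)$ one has $u, v \notin T = \{a, b\}$, so $u \neq v$ and hence $\{a,b,u\} \neq \{a,b,v\}$; and $\{u,v,w\}$ contains $u, v$ but at most one of $a, b$ (since $u, v \notin \{a,b\}$), so it equals neither of the first two edges. This yields three genuinely distinct edges and completes the contradiction. There is no real obstacle here, the lemma follows almost immediately from unpacking the definitions, and the argument parallels the triangle-free link argument of the preceding lemma.
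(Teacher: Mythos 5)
Your proof is correct and follows essentially the same route as the paper: both arguments observe that for $u,v\in N(T)$ the edges $T\cup\{u\}$ and $T\cup\{v\}$ have symmetric difference $\{u,v\}$, so cancellativity forbids any edge of $H$ containing both $u$ and $v$. Your explicit check that the three sets are distinct is a welcome (if minor) addition that the paper leaves implicit.
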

\begin{proof}
Let $u,v\in N(T)$ and let $A_1=\{u\}\cup T$ and $A_2=\{v\}\cup T$.
Note that $A_1$ and $A_2$ are contained in $H$.
Since $A_1\triangle A_2=\{u,v\}$ and by assumption there is no edge in $H$ containing $\{u,v\}$.
Therefore, $N(T)$ is an independent set.
\end{proof}

In the proof of theorem 1.4 we need the following lemma,
which is essentially the stability of triangle-free graphs.
For completeness we include its proof here.

Let $G$ be an ordinary graph and let $v$ be a vertex in $G$.
We use $N_{G}(v)$ to denote the neighborhood of $v$ in $G$, and use $d_{G}(v)$ to
denote the degree of $v$ in $G$.
\begin{lemma}
Let $G$ be a triangle-free graph on $[n]$ with at least $(1-\epsilon)(n/2)^2$ edges.
Then $G$ contains two vertices $v_1$ and $v_2$ such that $N_{G}(v_1)$ and $N_{G}(v_2)$ are disjoint
and $|N_{G}(v_1)|+ |N_{G}(v_2)|\ge (1-\epsilon)n$.
\end{lemma}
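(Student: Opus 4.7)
The plan is to take $v_1$ and $v_2$ to be the endpoints of the edge of $G$ that maximizes $d_G(u)+d_G(v)$. The disjointness of $N_G(v_1)$ and $N_G(v_2)$ is then essentially free: if $w$ were a common neighbor of two adjacent vertices $v_1,v_2$, then $\{v_1,v_2,w\}$ would be a triangle in $G$, contradicting the hypothesis. So the entire content of the lemma reduces to proving that
\[
\max_{\{u,v\}\in E(G)}\bigl(d_G(u)+d_G(v)\bigr)\ge (1-\epsilon) n.
\]

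For this I would use a standard averaging argument. First, double-count by rewriting the sum of $d_G(u)+d_G(v)$ over edges in terms of vertex degrees:
\[
\sum_{\{u,v\}\in E(G)}\bigl(d_G(u)+d_G(v)\bigr)=\sum_{v\in[n]} d_G(v)^2.
\]
Then apply Cauchy--Schwarz to the right-hand side, using $\sum_v d_G(v)=2|E(G)|$, to get
\[
\sum_{v\in[n]} d_G(v)^2\ge \frac{1}{n}\Bigl(\sum_{v\in[n]} d_G(v)\Bigr)^{\!2}=\frac{4|E(G)|^2}{n}.
\]
Dividing by $|E(G)|$, the average value of $d_G(u)+d_G(v)$ over the edges of $G$ is at least $4|E(G)|/n$, and plugging in $|E(G)|\ge (1-\epsilon)(n/2)^2$ makes this at least $(1-\epsilon)n$. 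Some edge must attain the average, giving the desired $v_1,v_2$.

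There is no real obstacle; the proof is essentially a one-line Cauchy--Schwarz once one notices that triangle-freeness takes care of the disjointness condition automatically for any edge. The only thing to check carefully is the direction of the Cauchy--Schwarz estimate and that we are indeed getting a lower bound on the maximum rather than the minimum, which follows from dividing the sum by the number of edges $|E(G)|$.
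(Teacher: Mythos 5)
Your proposal is correct and follows essentially the same route as the paper: both pick an edge achieving the average of $d_G(u)+d_G(v)$, use triangle-freeness to get disjointness of the neighborhoods, and bound $\sum_{v} d_G(v)^2 \ge 4|E(G)|^2/n$ (the paper cites Jensen where you cite Cauchy--Schwarz, but it is the same inequality here). No gaps.
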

\begin{proof}
Since $G$ is triangle-free.
So $N_{G}(u)$ and $N_{G}(v)$ are disjoint for all edge $uv$ in $G$.
Therefore, it suffices to find an edge $uv$ in $G$ such that $d_{G}(u)+ d_{G}(v) \ge (1-\epsilon)n$.
Combining an easy counting argument with the Jensen Inequality we obtain
\[
\sum_{uv\in E(G)}\left(d_{G}(u)+ d_{G}(v) \right) = \sum_{v\in V(G)} d^2_{G}(v) \ge \frac{\left( \sum_{v\in V(G)} d_{G}(v) \right)^2}{n} = \frac{4 e^2(G)}{n}
\]
It follows from an averaging argument that there exists an edge $uv$ with $d_{G}(u)+ d_{G}(v)\ge 4 e(G)/n \ge (1-\epsilon)n$.
\end{proof}


\section{Proofs of Theorems 1.1 and 1.2}
Let $H$ be a $\mathcal{K}_{{\ell}+1}^{(r)}$-free $r$-graph on $[n]$.
Define an auxiliary graph
\[
G=\left\{A\in\binom{[n]}{2}: \exists B\in H\text{ such that }A\in B  \right\}
\]
Let us state two easy facts about the relation between $H$ and $G$ without proof.
\begin{lemma}
\begin{enumerate}[label=(\alph*).]
\item $H$ is $\mathcal{K}_{{\ell}+1}^{(r)}$-free iff  $G$ is $K_{{\ell}+1}$-free.
\item The number of edges in $H$ is at most the number of copies of $K_r$ in $G$.
\end{enumerate}
\end{lemma}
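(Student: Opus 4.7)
The plan is to verify both parts of Lemma 2.5 directly from the definitions; no nontrivial tool is needed, which is presumably why the author lists them without proof. For part (a), I would handle the two implications separately. In the forward direction, assume that $G$ contains a clique $K_{{\ell}+1}$ on some $({\ell}+1)$-set $S$. By the construction of $G$, each pair $\{x,y\}\subset S$ sits inside some edge $B_{xy}\in H$, and collecting these $B_{xy}$'s produces an $r$-graph $F\subset H$ with at most $\binom{{\ell}+1}{2}$ edges that covers every pair of $S$, which is exactly the definition of a member of $\mathcal{K}_{{\ell}+1}^{(r)}$. In the reverse direction, if $F\subset H$ lies in $\mathcal{K}_{{\ell}+1}^{(r)}$ with witnessing $({\ell}+1)$-set $S$, then every pair inside $S$ is covered by some edge of $F\subset H$ and therefore becomes an edge of $G$, so $G$ contains $K_{{\ell}+1}$ on $S$.

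For part (b), the idea is to send each edge $B\in H$ to its own vertex set in $\binom{[n]}{r}$ and observe that $B$ spans a copy of $K_r$ in $G$: every one of the $\binom{r}{2}$ pairs inside $B$ is automatically an edge of $G$ by the definition of $G$. Since $H$ is a simple hypergraph, distinct edges of $H$ map to distinct $r$-subsets of $[n]$, producing an injection from $H$ into the set of $K_r$-copies in $G$, and hence $|H|\le k_r(G)$.

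I do not anticipate any real obstacle; the entire content of the lemma is bookkeeping from the definitions. The only mildly delicate point is that in part (a) the edges $B_{xy}$ may coincide for different pairs $\{x,y\}$, so the resulting $F$ can have strictly fewer than $\binom{{\ell}+1}{2}$ edges. This causes no trouble because the definition of $\mathcal{K}_{{\ell}+1}^{(r)}$ uses the phrase \emph{at most} $\binom{{\ell}+1}{2}$ edges, not \emph{exactly} that many, so any such $F$ still qualifies as a member of the family.
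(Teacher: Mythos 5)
Your proof is correct and complete; the paper itself states this lemma without proof (the author regards both parts as immediate from the definition of $G$), and your direct verification from the definitions --- including the observation that the edges $B_{xy}$ covering distinct pairs of $S$ may coincide, which is harmless since $\mathcal{K}_{{\ell}+1}^{(r)}$ only requires \emph{at most} $\binom{{\ell}+1}{2}$ edges --- is exactly the intended bookkeeping. Nothing further is needed.
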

\qed

\noindent\textbf{Proof of theorem 1.1:}
Combining lemma $3.1$ with equation $(1)$, we obtain that $|H|\le \binom{{\ell}}{r}\left( \frac{n}{{\ell}}\right)^{r}$.
This proves theorem 1.1 for the case ${\ell}\text{ divides }n$, and we omit the proof of the other case.
\qed

\medskip

\noindent\textbf{Proof of theorem 1.2:} Choose $\epsilon=(r-2)!\delta$, and let $n$ be sufficiently large.
By assumption we have $|H|\ge (1-\epsilon)t_{r}(n,{\ell})\ge (1-2\epsilon)\binom{{\ell}}{r}\left( n/{\ell}\right)^{r}$.
Combining lemma $3.1$ with equation $(1)$ we know that the number of edges $e$  in $G$ satisfies
\[
e\ge (1-2\epsilon)^{2/r}\binom{{\ell}}{2}\left(\frac{n}{{\ell}}\right)^2\ge (1-2\epsilon)\binom{{\ell}}{2}\left( \frac{n}{{\ell}}\right)^2\ge (1-2\epsilon)t_{2}(n,{\ell})
\]
Therefore, by theorem 2.1, $G$ has a vertex set partition $V_1\cup ...\cup V_{\ell}$ such that
all but at most $2\epsilon t_{2}(n,{\ell})$ edges of $G$ have at most one vertex in each $V_i$.
It follows that all but at most $2\epsilon t_{2}(n,{\ell})\binom{n}{r-2}\le \epsilon n^r/(r-2)!\le \delta n^r$ edges of $H$ have at most one vertex in each $V_i$. This completes the proof of theorem 1.2.
\qed


\section{Proofs of Theorems 1.3 and 1.4}
The most improtant step in this section is building an relation between $H$ and $\partial H$, which is equation $(2)$.

\noindent\textbf{Proof of theorem 1.3:}
Let us count the number of ordered pairs  $(u,v)$ in  $[n]^2\setminus\overrightarrow{\partial H}$.
By lemma 2.4, if $\{u,v\}$ is contained in $N(e)$ for some $e\in \partial H$, then $\{u,v\}$ can not be contained in ${\partial H}$.
Since every set $S\subset [n]$ is contained in exactly $|L(S)|$ sets in $\{ N(T):  T\in \partial H\}$.
Therefore, we have
\[
\sum_{T\in \partial H}\sum_{(u,v)\in N^2(T)}\frac{1}{|L(u,v)|}\le n^2-2|\partial H| \eqno{(2)}
\]
Combining lemma 2.3 with Mantel's theorem we obtain that  $|L(u,v)|\le \left(n-d(T)\right)^2/4$ for every $(u,v)\in[n]^2$.
It follows from $(2)$ that
\[
\sum_{T\in \partial H}4\left(\frac{d(T)}{n-d(T)}\right)^2 \le n^2- 2|\partial H|
\]
Since $\left(x/(n-x)\right)^2$ is convex for $x\in [0,n]$, it follows from Jensen's inequality that
\[
4\left(\frac{ 3|H|/|\partial H| }{ n-3|H|/|\partial H| } \right)^2 |\partial H|\le n^2-2| \partial H|
\]
Now let $z=\frac{ 3|H|/|\partial H| }{ n-3|H|/|\partial H| }$.
Then $(4)$ implies
\[
|\partial H|\le \frac{n^2}{2(2z^2+1)}
\]
Substitute $|H|=\frac{zn}{3(z+1)}|\partial H|$ into the equation above  we obtain
\[
|H|\le \frac{z}{6(z+1)(2z^2+1)} n^3
\]
Since the maximum of $\frac{z}{6(z+1)(2z^2+1)}$ is $1/27$.
Therefore, we have $|H|\le \left(\frac{n}{3}\right)^3$.
This proves theorem 1.3 for the case $3$ divides $n$, and we omit the proof of the other case.
\qed

\bigskip

Choose $\epsilon = \delta/100$.
Let $H$ be a cancellative $3$-graph on $[n]$ with at least $(1-\epsilon) t_3(n,3)> (1-2\epsilon)(n/3)^3$ edges.
Before we prove theorem 1.4, let us present a lemma follows from equation $(2)$.

\begin{lemma}
There exists $T\in \partial H$ such that
\[
\sum_{(u,v)\in N^2(T)}|L(u,v)|\ge (1- 100 \epsilon)d^2(T)\left(\frac{n-d(T)}{2}\right)^2 \eqno{(3)}
\]
\end{lemma}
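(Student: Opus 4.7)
The plan is to argue by contradiction, following the same Cauchy--Schwarz plus Jensen recipe that proved Theorem 1.3, but this time carefully tracking the $(1-100\epsilon)$ slack. Assume that for every $T\in\partial H$ the sum $S(T):=\sum_{(u,v)\in N^2(T)}|L(u,v)|$ falls strictly short of $(1-100\epsilon)d(T)^2\left((n-d(T))/2\right)^2$. The first step is to apply Cauchy--Schwarz fiberwise:
\[
\sum_{(u,v)\in N^2(T)}\frac{1}{|L(u,v)|}\ \ge\ \frac{|N^2(T)|^2}{S(T)}\ =\ \frac{d(T)^4}{S(T)}.
\]
Under our contrapositive assumption this gives $\sum_{(u,v)\in N^2(T)}1/|L(u,v)|>4d(T)^2/\bigl((1-100\epsilon)(n-d(T))^2\bigr)$, and summing over $T\in\partial H$ and feeding into equation $(2)$ produces
\[
\sum_{T\in\partial H}\left(\frac{d(T)}{n-d(T)}\right)^2\ <\ \frac{(1-100\epsilon)\bigl(n^2-2|\partial H|\bigr)}{4}.
\]

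The second step is exactly the Jensen move used in Theorem 1.3. Since $x\mapsto (x/(n-x))^2$ is convex on $[0,n]$, setting $\bar d=3|H|/|\partial H|$ and $z=\bar d/(n-\bar d)$ gives $\sum_T (d(T)/(n-d(T)))^2\ge |\partial H|z^2$. Solving the resulting linear inequality for $|\partial H|$ yields
\[
|\partial H|\ <\ \frac{(1-100\epsilon)\,n^2}{4z^2+2(1-100\epsilon)},
\]
and substituting $|H|=zn|\partial H|/(3(z+1))$ produces the key bound
\[
|H|\ <\ \frac{(1-100\epsilon)\,z\,n^3}{6(z+1)\bigl(2z^2+1-100\epsilon\bigr)}.
\]

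The final step is the single-variable calculus check: one has to verify that for every $z>0$ (and $\epsilon$ small), the right-hand side is strictly less than $(1-2\epsilon)(n/3)^3$. This would contradict the standing hypothesis $|H|\ge (1-\epsilon)t_3(n,3)>(1-2\epsilon)(n/3)^3$ and finish the proof. I expect this analytic check to be the only delicate part of the argument. Recall that without the $(1-100\epsilon)$ perturbation the function $z/\bigl(6(z+1)(2z^2+1)\bigr)$ is maximized at $z=1/2$ with value $1/27$. For the perturbed function the maximum is attained near $z=1/2$, and the ratio of the perturbed bound to $1/27$ behaves like $1-100\epsilon\cdot\frac{2z^2}{2z^2+1}+O(\epsilon^2)$, which near $z=1/2$ is $1-\tfrac{100}{3}\epsilon+O(\epsilon^2)$; this sits comfortably below $1-2\epsilon$, while for $z$ away from $1/2$ the unperturbed factor $f(z)=z/(6(z+1)(2z^2+1))$ is already bounded away from $1/27$ by a uniform constant, so the perturbation is irrelevant. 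The constant $100$ in the statement is chosen precisely to make this margin robust.
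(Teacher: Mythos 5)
Your proposal is correct and follows essentially the same route as the paper: assume $(3)$ fails for every $T$, apply Jensen/Cauchy--Schwarz to $\sum 1/|L(u,v)|$, feed the result into equation $(2)$, and rerun the Theorem 1.3 computation to get $|H|< \frac{(1-100\epsilon)z}{6(z+1)(2z^2+1-100\epsilon)}n^3$, contradicting $|H|>(1-2\epsilon)(n/3)^3$. The only difference is the endgame: you optimize the perturbed function directly over all $z>0$ (splitting into $z$ near $1/2$, where the ratio is about $1-\tfrac{100}{3}\epsilon$, and $z$ away from $1/2$, where $f(z)$ is already bounded below $1/27$), whereas the paper first deduces $z>1/8$ from the hypothesis $|H|\ge(1-\epsilon)t_3(n,3)$ and then uses the algebraic inequality $\tfrac{2z^2}{1-100\epsilon}+1>\tfrac{2z^2+1}{1-2\epsilon}$; both closings are valid.
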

\begin{proof}
Suppose that $(3)$ is false for all $T\in \partial H$.
Since $1/x$ is convex for $x>0$, it follows from Jensen's inequality that
\[
\sum_{(u,v)\in N^2(T)}\frac{1}{|L(u,v)|}\ge \frac{ d^2(T)}{\sum_{(u,v)\in N^2(T)} |L(u,v)|/d^2(T)}>\frac{ 4d^2(T)}{(1-100 \epsilon)\left(n-d(T)\right)^2} \eqno{(4)}
\]
Substitute $(4)$ into $(2)$ we obtain
\[
\sum_{T\in \partial H}\frac{ 4 d^2(T)}{(1-100 \epsilon)\left(n-d(T)\right)^2}\le n^2-2|\partial H|
\]
Similar argument as in the proof of theorem 1.3 yields
\[
|H|\le \frac{z}{6(z+1)\left(\frac{2z^2}{1-100 \epsilon}+1\right)} n^3
\]
By assumption we have $\frac{3 |H| }{|\partial H|} \ge \frac{3 (1- \epsilon )(n/3)^3}{n^2/2}\ge 1/9$.
Therefore, we may assume that $z>1/8$.
It follows that $\frac{2z^2}{1- 100 \epsilon }+1 > \frac{2z^2+1}{1- 2 \epsilon}$.
So we obtain
\[
\frac{z}{6(z+1)\left(\frac{2z^2}{1- 100 \epsilon }+1\right)}< (1- 2 \epsilon)\frac{z}{6(z+1)\left(2z^2+1\right)}\le \frac{1}{27}(1-2\epsilon )
\]
This implies that  $|H|< (1- 2 \epsilon)\left(\frac{n}{3} \right)^3<(1-\epsilon)t_3(n,3)$, which is a contradiction.
\end{proof}

\bigskip

\noindent\textbf{Proof of theorem 1.4:}
Choose $T\in \partial H$ such that $(3)$ holds for $T$.
Let $V_1'= N(T)$.
By Pigeonhole principle, there exists a pair $(u,v)\in N^2(T)$ such that $|L(u,v)|\ge (1- 100 \epsilon )\left((n-d(T))/{2}\right)^2$.
Let $L$ denote the graph $L(u,v)$ and let $U$ denote the vertex set $[n]\setminus N(T)$.
Combining lemma 2.4 with lemma 2.5 we know that there exist two vertices $x$ and $y$ in $U$
such that $N_{L}(x)$ and $N_{L}(y)$ are disjoint and $N_{L}(x)+ N_{L}(y)\ge (1- 100 \epsilon )(n-d(T))$.
Let $V_2= N_{L}(x)$ and $V_3= N_{L}(y)$.
Note that $N_{L}(x)= N(ux)$ and $N_{L}(y)= N(uy)$ and hence $V_2$ and $V_3$ are independent sets in $H$.

Now we have independent sets $V_1',V_2$ and $V_3$, and $|V_1'|+|V_2|+|V_3|\ge d(T) + (1- 100 \epsilon )(n-d(T)) > n- 100 \epsilon n$.
Let $V_1= [n]\setminus (V_2\cup V_3)$.
The number of edges in $H$ that has at least two vertices in some $V_i$
is at most $\binom{100 \epsilon n}{3}+\binom{100 \epsilon n}{2}\binom{n}{1}+\binom{100 \epsilon n}{1}\binom{n}{2}< 100 \epsilon n^3= \delta n^3$.
This completes the proof of theorem 1.4.
\qed


\section{Further Applications}
In this section we present some applications of equation $(1)$ in the generalized Tur\'{a}n problems.

Let $T$ and $H$ be two ordinary graphs.
Let $ex(n, T, H)$ denote the maximum possible number of copies of $T$
in an ordinary $H$-free graph on $n$ vertices.
The function $ex(n,T,H)$ is called the generalized Tur\'{a}n number.

Fix ${\ell}\ge r \ge 3$.
In $\cite{ER62}$ Erd\H{o}s proved that $ex(n, K_r, K_{{\ell}+1})\le t_{r}(n,{\ell})$.
Actually a similar argument as in the proofs of theorems 1.1 and 1.2 also gives
an exact and stability result to $ex(n,K_r,K_{{\ell}+1})$.
Here we state the stability result without proof.

\begin{theorem}
Fix ${\ell}\ge r\ge 3$, and $\delta>0$.
Then there exists an $\epsilon>0$ and an $n_0$ such that the following holds for all $n\ge n_0$.
If $G$ is an $n$-vertex $K_{{\ell}+1}$-free graph containing at least $(1-\epsilon)\binom{{\ell}}{r}t_r(n,{\ell})$ copies of $K_r$,
then $G$ has a vertex set partition $V_1\cup \ldots \cup V_{\ell}$ such that all but at most $\delta n^2$ edges have at most one vertex in
each $V_i$.
\end{theorem}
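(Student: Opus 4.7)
The plan is to mirror the argument of Theorem~1.2, but with $G$ itself playing the role that the auxiliary graph played there: copies of $K_r$ take the place of edges of $H$. The two ingredients are identical, namely Fisher--Ryan (equation~(1)) and F\"uredi's stability theorem (Theorem~2.1). No auxiliary construction is needed this time, since $G$ is already a graph.

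First I would let $k_r$ denote the number of copies of $K_r$ in $G$ and set $k_2=e(G)$. Reading the hypothesis as $k_r \ge (1-\epsilon)\,t_r(n,{\ell}) = (1-\epsilon)\binom{{\ell}}{r}(n/{\ell})^r$ (up to the usual rounding when ${\ell}\nmid n$), and using that $G$ is $K_{{\ell}+1}$-free, Theorem~2.2 yields
\[
k_2 \;\ge\; \binom{{\ell}}{2}\Bigl(\tfrac{k_r}{\binom{{\ell}}{r}}\Bigr)^{2/r} \;\ge\; (1-\epsilon)^{2/r}\binom{{\ell}}{2}\Bigl(\tfrac{n}{{\ell}}\Bigr)^2 \;\ge\; \Bigl(1-\tfrac{2\epsilon}{r}\Bigr)\,t_2(n,{\ell}).
\]
Writing $t:=t_2(n,{\ell})-k_2$, this gives $t\le (2\epsilon/r)\,t_2(n,{\ell})$.

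Next I would feed $G$ into Theorem~2.1, which produces an ${\ell}$-partite subgraph $G'\subseteq G$ with at least $t_2(n,{\ell})-2t$ edges; let $V_1\cup\cdots\cup V_{\ell}$ be the partition of $[n]$ witnessing the ${\ell}$-partiteness of $G'$. The edges of $G$ with both endpoints in a common $V_i$ are precisely those not in $G'$, so their number is at most
\[
k_2 - \bigl(t_2(n,{\ell})-2t\bigr) \;=\; t \;\le\; \tfrac{2\epsilon}{r}\,t_2(n,{\ell}) \;\le\; \epsilon\, n^2.
\]
Choosing $\epsilon$ to be a small multiple of $\delta$ (e.g.\ $\epsilon=\delta$, with $n_0$ large enough to absorb the $(1-\epsilon)^{2/r}\ge 1-2\epsilon/r$ slack and any ${\ell}\nmid n$ rounding) completes the proof.

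There is essentially no hard step: the argument is structurally the same as the proof of Theorem~1.2 and, in fact, slightly simpler because no passage from an $r$-graph to an auxiliary $2$-graph is required. The only points needing routine care are the elementary estimate $(1-\epsilon)^{2/r}\ge 1-2\epsilon/r$ and absorbing the sub-quadratic corrections to $t_2(n,{\ell})$ into the linear dependence of $\epsilon$ on $\delta$.
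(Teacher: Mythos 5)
Your proof is correct and follows exactly the route the paper intends: the paper states Theorem~5.1 without proof, remarking only that ``a similar argument as in the proofs of theorems 1.1 and 1.2'' works, and your argument is precisely that adaptation (Fisher--Ryan to pass from $K_r$-counts to an edge count, then F\"uredi's stability theorem), yielding the same linear dependence $\epsilon=\delta$ the paper claims. The only nitpick is that the edges of $G$ inside a part are contained in, rather than equal to, $E(G)\setminus E(G')$, but the inequality you use goes the right way.
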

\qed

Note that our proof implies that it suffices to choose $\epsilon = \delta$.

\bigskip

In $\cite{AS16}$ Alon and Shikhelman studied the function $ex(n, T, H)$ for other combinations of $T$ and $H$.
In particular they proved that $ex(n,K_r, H)= (1+o(1))t_{r}(n,{\ell})$ holds for every graph $H$ with chromatic number $\chi(H)= {\ell}+1$.
Later their result was improved by Ma and Qiu $\cite{MQ18}$, who proved that $ex(n,K_r, H)= t_{r}(n,{\ell})+\text{biex}(n,H)\cdot \Theta(n^{r-2})$,
where $\text{biex}(n,H)$ is the Tur\'{a}n number of the decomposition family of $H$.
Moreover they proved a stability result for $ex(n,K_r, H)$.
\begin{theorem}[Ma and Qiu, \cite{MQ18}]
Fix ${\ell}\ge r\ge 3$, and $\delta>0$.
For every graph $H$ with chromatic number ${\ell}+1$,
there exists an $\epsilon>0$ and an $n_0$ such that the following holds for all $n\ge n_0$.
If $G$ is an $n$-vertex $H$-free graph containing at least $(1-\epsilon)\binom{{\ell}}{r}t_r(n,{\ell})$ copies of $K_r$,
then $G$ has a vertex set partition $V_1\cup \ldots \cup V_{\ell}$ such that all but at most $\delta n^2$ edges have at most one vertex in
each $V_i$.
\end{theorem}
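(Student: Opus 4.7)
The plan is to reduce the $H$-free setting to the $K_{\ell+1}$-free setting handled by Theorem 5.1, and then to run the argument of the proof of Theorem 1.2. The only new ingredient is a supersaturation step exploiting the hypothesis $\chi(H)=\ell+1$.

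\medskip

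First I would show that the $H$-free graph $G$ contains at most $o(n^{\ell+1})$ copies of $K_{\ell+1}$. Set $s:=|V(H)|$; since $\chi(H)=\ell+1$, $H$ is a subgraph of the balanced blow-up $K_{\ell+1}(s,\ldots,s)$. A standard supersaturation argument (averaging over all $\binom{n}{(\ell+1)s}$ vertex-subsets and weighting by the number of $K_{\ell+1}$'s they contain) shows that for some $\gamma=\gamma(H)>0$, any $n$-vertex graph with at least $\gamma n^{\ell+1}$ copies of $K_{\ell+1}$ already contains $K_{\ell+1}(s,\ldots,s)$, and hence $H$. Because $G$ is $H$-free, it contains at most $\gamma n^{\ell+1}$ copies of $K_{\ell+1}$, and then either the graph removal lemma or a direct greedy edge-deletion produces a $K_{\ell+1}$-free subgraph $G'$ obtained from $G$ by deleting at most $\epsilon_1 n^2$ edges, where $\epsilon_1\to 0$ as $\gamma\to 0$.

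\medskip

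Next I would run the proof of Theorem 1.2 on $G'$. Each deleted edge is contained in at most $n^{r-2}$ copies of $K_r$, so
\[
k_r(G')\;\ge\;k_r(G)-\epsilon_1 n^r\;\ge\;(1-\epsilon_2)\binom{\ell}{r}\left(\frac{n}{\ell}\right)^{r}
\]
for some $\epsilon_2$ small in terms of $\epsilon$ and $\epsilon_1$. Applying Fisher--Ryan (Theorem 2.2) to the $K_{\ell+1}$-free graph $G'$ yields
\[
k_2(G')\;\ge\;\binom{\ell}{2}\left(\frac{k_r(G')}{\binom{\ell}{r}}\right)^{\!2/r}\;\ge\;(1-\epsilon_2)^{2/r}t_2(n,\ell)\;\ge\;(1-\epsilon_3)t_2(n,\ell),
\]
after which F\"uredi's stability theorem (Theorem 2.1) gives a partition $[n]=V_1\cup\cdots\cup V_\ell$ for which all but at most $2\epsilon_3 t_2(n,\ell)\le \epsilon_3 n^2$ edges of $G'$ are transversal.

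\medskip

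I would output this very partition for $G$. The non-transversal edges of $G$ are contained in the union of the non-transversal edges of $G'$ and the edges we deleted to form $G'$, so their number is at most $(\epsilon_3+\epsilon_1)n^2\le\delta n^2$ provided $\epsilon$ is taken sufficiently small in terms of $\delta$ and $H$. The main obstacle is the first step: the supersaturation-and-removal argument has notoriously bad quantitative dependence on $\gamma$, so unlike Theorems 1.2 and 5.1 this approach will not give a linear relation between $\epsilon$ and $\delta$; obtaining such a linear dependence would require avoiding the removal lemma, for instance by directly establishing an $H$-free analogue of Theorem 2.1.
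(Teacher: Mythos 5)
Your proposal is essentially the paper's own proof: the paper likewise applies the $K_{\ell+1}$-removal lemma to pass to a $K_{\ell+1}$-free subgraph $G'$ losing $o(n^2)$ edges, notes that each deleted edge kills at most $\binom{n}{r-2}$ copies of $K_r$, invokes Theorem 5.1 (whose proof is exactly your Fisher--Ryan plus F\"uredi step) to partition $G'$, and transfers the partition back to $G$; your only addition is unpacking Theorem 5.1 and the supersaturation behind the removal lemma. One small caveat: the ``direct greedy edge-deletion'' you offer as an alternative to the removal lemma does not work (deleting one edge per copy of $K_{\ell+1}$ could cost far more than $o(n^2)$ edges), so the removal lemma is genuinely needed there, as the paper uses it --- and your closing remark that this step blocks a linear $\epsilon$--$\delta$ dependence is at odds with the paper's claim that $\epsilon=\delta/3$ suffices, a claim which indeed rests on the removal lemma's $n_0$ absorbing the non-effective dependence rather than on $\epsilon$ itself.
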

Here we present a short proof to theorem 5.2 using theorem 5.1 and the Removal Lemma, and our proof implies that it is suffices to choose $\epsilon= \delta/3$.
\begin{theorem}[Removal Lemma, e.g. see \cite{FU15}, \cite{FO11}]
Let $H$ be a graph with chromatic number ${\ell}+1$.
For every $\delta >0$ there exists an $n_0$ such that the following holds for all $n\ge n_0$.
Every $n$-vertex $H$-free graph $G$ can be made $K_{{\ell}+1}$-free by removing
at most $\delta n^2$ edges.
\end{theorem}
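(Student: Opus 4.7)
The plan is the standard regularity-based proof of the graph removal lemma, specialized so that the cleaned subgraph is $K_{\ell+1}$-free rather than merely $H$-free. Given $\delta > 0$ and a graph $H$ with $\chi(H) = \ell+1$, I would set $t = |V(H)|$ and fix auxiliary parameters $d$, $\epsilon$, $k_0$ with $d \le \delta/8$, $k_0 \ge 8/\delta$, and $\epsilon$ small enough in terms of $d$, $t$, and $\ell$ for the Key Embedding Lemma to apply, while also enforcing $\epsilon \le \delta/8$. Apply Szemer\'edi's Regularity Lemma to $G$ with tolerance $\epsilon$ and minimum partition size $k_0$, producing an equitable partition $V_0 \cup V_1 \cup \cdots \cup V_k$ in which $|V_0| \le \epsilon n$ and all but $\epsilon \binom{k}{2}$ pairs $(V_i,V_j)$ are $\epsilon$-regular.

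Next, clean $G$ by deleting every edge belonging to one of the following four categories: incident to $V_0$; inside a single part $V_i$; inside an irregular pair; inside a regular pair of density below $d$. A routine count bounds these four contributions by $\epsilon n^2$, $n^2/(2k_0)$, $\epsilon n^2/2$, and $d n^2/2$, respectively, which with the chosen parameters sums to at most $\delta n^2$. Let $G^*$ be the cleaned graph; it remains to show $G^*$ is $K_{\ell+1}$-free. Suppose it contained a copy of $K_{\ell+1}$. Since every surviving edge lies across an $\epsilon$-regular pair of density at least $d$, the vertices of that clique must lie in distinct parts $V_{i_1},\ldots,V_{i_{\ell+1}}$, and each pair $(V_{i_a},V_{i_b})$ is $\epsilon$-regular of density $\ge d$. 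The Key Embedding Lemma then supplies a copy of any fixed subgraph of the blow-up $K_{\ell+1}[t]$ inside $G$ with one vertex class per $V_{i_a}$. Since $\chi(H) = \ell+1$, $H$ embeds into $K_{\ell+1}[t]$, producing a copy of $H$ in $G$ and contradicting its $H$-freeness.

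The main obstacle is the Key Embedding Lemma, which quantifies how small $\epsilon$ must be (relative to $d$, $t$, and $\ell$) for a regular $(\ell+1)$-tuple of dense classes to contain a fixed $(\ell+1)$-colorable graph of order $t$. This is a standard tool included in both references \cite{FU15} and \cite{FO11}. Modulo the Regularity Lemma and this embedding lemma, the entire argument reduces to the bookkeeping inequality $\epsilon + d + 1/(2k_0) \lesssim \delta$, which is comfortably satisfied by the parameter choices above.
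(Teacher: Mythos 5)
Your argument is correct and is the standard regularity-lemma proof of the removal lemma; the paper itself states this result as a known theorem with citations to \cite{FU15} and \cite{FO11} and supplies no proof of its own, so there is nothing in the text to diverge from. Your parameter bookkeeping ($\epsilon + 1/(2k_0) + \epsilon/2 + d/2$ times $n^2$ bounded by $\delta n^2$), the observation that a surviving $K_{\ell+1}$ forces $\ell+1$ distinct parts that are pairwise $\epsilon$-regular of density at least $d$, and the embedding of $H\subseteq K_{\ell+1}[t]$ via the key lemma (valid since $\chi(H)=\ell+1$) together give exactly the proof the cited sources have in mind.
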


\noindent\textbf{Proof of Theorem 5.2: }
Let $n$ be sufficiently large. Choose $\epsilon = \delta/3$.
Let $G$ be an $n$-vertex $H$-free graph containing at least $(1-\epsilon)\binom{{\ell}}{r}t_r(n,{\ell})$ copies of $K_r$.
By the Removal Lemma, $G$ contains a $K_{{\ell}+1}$-free subgraph $G'$ with at least $e(G)-\epsilon n^2/{\ell}^r$ edges.
Since every edge $e$ in $G$ is contained in at most $\binom{n}{r-2}$ copies of $K_{r}$ in $G$.
Therefore, the number of copies of $K_r$ in $G'$ is at least $(1-2\epsilon)\binom{{\ell}}{r}t_r(n,{\ell})$.
By theorem 5.1, $G'$ has a vertex partition $V_1\cup \ldots \cup V_{\ell}$ such that all but at most
$2\epsilon n^2$ edges in $G'$ have at most one vertex in each $V_i$.
Therefore, all but at most $3 \epsilon n^2$ edges in $G$ have at most one vertex in each $V_i$.
\qed


\section{Concluding Remarks}
Note that we showed that a linear dependence between $\delta$ and $\epsilon$ is sufficient for Theorems 1.2, 1.4, 5.1 and 5.2,
and in $\cite{FU15}$ F\"{u}redi showed that a linear dependence is also sufficient for Theorem 2.1.
So one might wondering if the linear dependence between $\delta$ and $\epsilon$ is tight (up to a constant) for the stability theorems above.
In other words, if there exists an absolute constant $C>0$ such that for every $\epsilon>0$ there exists a construction with $\delta \ge C \epsilon$.

We did not try to answer the question above in full generality, but our example  below  of $K_3$-free graphs shows that the answer seems to be negative.

\bigskip

Fix $\epsilon>0$.
Let $G=(V,E)$ be an $n$-vertex $K_3$-free graph with $\left(1/4-\epsilon \right)n^2$ edges.
Let $V_1\cup V_2$ be a partition of $V$ such that the number of edges in the bipartite graph $G[V_1,V_2]$ is maximum.
Define the set of \textit{bad edges} $B$ and the set of \textit{missing edges} $M$ as following.
\[
B=\{uv\in E(G): u,v\in V_i \text{ for some } i\in \{1,2\}\}
\]
and
\[
M=\{uv\not\in E(G): u\in V_1\text{ and } v\in V_2\}
\]
Therefore, in order to make $G$ bipartite one has to remove all edges in $B$.

Assume that $|B|= \delta n^2$.
Let $B_1= B\cap \binom{V_1}{2}$ be the set of bad edges contained in $V_1$.
Without lose of generality we may assume that $|B_1|\ge \delta n^2/2$.

For every vertex $v\in V_1$, let $N_1(v)$ be the neighborhood of $v$ in $V_1$, and let $d_1(v)= |N_1(v)|$.
Let $N_2(v)$ be the neighborhood of $v$ in $V_2$, and let $d_2(v)= |N_2(v)|$.
By the maximality of the partition $V_1\cup V_2$, we know that $d_2(v)\ge d_1(v)$ since otherwise one can move $v$ from $V_1$ to $V_2$ to get a larger bipartite subgraph of $G$.
Also we know that there is no edge between $N_{1}(v)$ and $N_2(v)$ since $G$ is $K_3$-free.

\medskip

Now let $\Delta = \max\{d_1(v): {v\in V_1}\}$.

\medskip

\textbf{Case 1: } $\Delta \ge \delta^{1/3}n$.
Then choose $v\in V_1$ of maximum degree $\Delta$.
Since there is no edge between $N_{1}(v)$ and $N_2(v)$.
Therefore, $|M|\ge (\Delta n)^2\ge \delta^{2/3} n^2$.
On the other hand, we have $|M|\le \epsilon n^2 + \delta n^2$.
So
\[
\delta^{2/3} \le \epsilon + \delta
\]
which implies that $\lim_{\epsilon \to 0} {\delta}/{\epsilon} =0$.

\medskip

\textbf{Case 2: } $\Delta < \delta^{1/3}n$.
Using a greedy strategy one can choose a matching $\mathcal{M}$ with at least $\left( \delta n^2/2 \right)/ \left( 2 \delta^{1/3} n \right)=\delta^{2/3}n/{4}$ edges from $B_1$.
Let $u_1v_1,...,u_mv_m$ be the edges in $\mathcal{M}$.
Since $G$ is $K_3$-free. Therefore, we have $d_2(u_i) + d_2(v_i)\le |V_2|$ and hence
\[
|M|\ge \sum_{i=1}^{m}\left(2|V_2|- d_2(u_i) - d_2(v_i) \right)\ge m |V_2|\ge \frac{\delta^{2/3}}{4}n \times \frac{n}{3}=  \frac{\delta^{2/3}}{12}n^2
\]
Similarly we obtain that $\lim_{\epsilon \to 0} {\delta}/{\epsilon} =0$.

Our example above shows that for $K_3$-free graphs there is no absolute constant $C>0$ such that $\delta/ \epsilon\ge C$ holds for all $\epsilon >0$.


\section{Acknowledgement}
The author is very grateful to Dhruv Mubayi for his suggestions that have greatly improved the presentation.

\bibliographystyle{unsrt}
\bibliography{cancel_stab}
\end{document}